\def\jobis#1{FF\fi
  \def\predicate{#1}%
  \edef\predicate{\expandafter\strip@prefix\meaning\predicate}%
  \edef\job{\jobname}%
  \ifx\job\predicate
}
\if\jobis{proposal}%
 \def\try{subsection}%
  \def\try{section}%
\theoremstyle{plain}
\newtheorem{theorem}{Theorem}[\try]
\newtheorem{lemma}[theorem]{Lemma}
\newtheorem{proposition}[theorem]{Proposition}
\newtheorem{definition-lemma}[theorem]{Definition-Lemma}
\newtheorem{question}[theorem]{Question}
\newtheorem{red-question}[theorem]{\textcolor{red}{Question}}
\theoremstyle{definition}
\newtheorem{remark}[theorem]{Remark}
\def\ideal#1.{I_{#1}}
\def\ring#1.{\mathcal {O}_{#1}}
\def\fring#1.{\hat{\mathcal {O}}_{#1}}
\def\proj#1.{\mathbb {P}(#1)}
\def\pr #1.{\mathbb {P}^{#1}}
\def\dpr #1.{\hat{\mathbb {P}}^{#1}}
\def\af #1.{\mathbb A^{#1}}
\def\Hz #1.{\mathbb F_{#1}}
\def\Hbz #1.{\overline{\mathbb F}_{#1}}
\def\fb#1.{\underset #1 {\times}}
\def\rest#1.{\underset {\ \ring #1.} \to \otimes}
\def\au#1.{\operatorname {Aut}\,(#1)}
\def\deg#1.{\operatorname {deg } (#1)}
\def\pic#1.{\operatorname {Pic}\,(#1)}
\def\pico#1.{\operatorname{Pic}^0(#1)}
\def\picg#1.{\operatorname {Pic}^G(#1)}
\def\ner#1.{NS (#1)}
\def\rdown#1.{\llcorner#1\lrcorner}
\def\rfdown#1.{\lfloor{#1}\rfloor}
\def\rup#1.{\ulcorner{#1}\urcorner}
\def\rcup#1.{\lceil{#1}\rceil}
\def\n1#1.{\operatorname {N_1}(#1)}  
\def\cn1#1.{\overline{\operatorname {N^1}(#1)}} 
\def\cone#1.{\operatorname {NE}(#1)}     
\def\ccone#1.{\overline{\operatorname {NE}}(#1)}
\def\none#1.{\operatorname {NF}(#1)}
\def\cnone#1.{\overline{\operatorname {NF}}(#1)}
\def\mone#1.{\operatorname {NM}(#1)} 
\def\cmone#1.{\overline{\operatorname {NM}}(#1)}
\def\coef#1.{\frac{(#1-1)}{#1}}
\def\vit#1.{D_{\langle #1 \rangle}}
\def\mm#1.{\overline {M}_{0,#1}}
\def\H1#1.{H^1(#1,{\ring #1.})}
\def\ac#1.{\overline {\mathbb F}_{#1}}
\def\adj#1.{\frac {#1-1}{#1}}
\def\spn#1.{\overline{#1}}
\def\pek#1.#2.{\Cal P^{#1}(#2)}
\def\plk#1.#2.{\Cal P^{\leq #1}(#2)}
\def\ev#1.{\operatorname{ev_{#1}}}
\def\ilist#1.{{#1}_1,{#1}_2,\dots}
\def\bminv#1.{(\nu_1,s_1;\nu_2,s_2;\dots ;\nu_{#1},s_{#1};\nu_{r+1})}
\def\zinv#1.{(\nu_1,s_1;\nu_2,s_2;\dots ;\nu_{#1},s_{#1};0)}
\def\iinv#1.{(\nu_1,s_1;\nu_2,s_2;\dots ;\nu_{#1},s_{#1};\infty)}
\def\scr #1.{\mathcal #1}
\def\llist#1.#2.{{#1}_1,{#1}_2,\dots,{#1}_{#2}}
\def\ulist#1.#2.{{#1}^1,{#1}^2,\dots,{#1}^{#2}}
\def\lomitlist#1.#2.{{#1}_1,{#1}_2,\dots,\hat {{#1}_i}, \dots, {#1}_{#2}}
\def\lomitlistz#1.#2.{{#1}_0,{#1}_1,\dots,\hat {{#1}_i}, \dots, {#1}_{#2}}
\def\loc#1.#2.{\Cal O_{#1,#2}}
\def\fderiv#1.#2.{\frac {\partial #1}{\partial #2}}
\def\deriv#1.#2.{\frac {d #1}{d #2}}
\def\map#1.#2.{#1 \longrightarrow #2}
\def\rmap#1.#2.{#1 \dasharrow #2}
\def\emb#1.#2.{#1 \hookrightarrow #2}
\def\non#1.#2.{\text {Spec }#1[\epsilon]/(\epsilon)^{#2}}
\def\Hi#1.#2.{\text {Hilb}^{#1}(#2)}
\def\sym#1.#2.{\operatorname {Sym}^{#1}(#2)}
\def\Hb#1.#2.{\text {Hilb}_{#1}(#2)}
\def\Hm#1.#2.{\Hom_{#1}(#2)}
\def\prd#1.#2.{{#1}_1\cdot {#1}_2\cdots {#1}_{#2}}
\def\Bl #1.#2.{\operatorname {Bl}_{#1}#2}
\def\pl #1.#2.{#1^{\otimes #2}}
\def\mgn#1.#2.{\overline {M}_{#1,#2}}
\def\ialist#1.#2.{{#1}_1 #2 {#1}_2, #2\dots}
\def\pair#1.#2.{\langle #1, #2\rangle}
\def\vandermonde#1.#2.{\left|
\begin{matrix}
1 & 1 & 1 & \dots & 1\\
{#1}_1 & {#1}_2 & {#1}_3 & \dots & {#1}_{#2}\\
{#1}_1^2 & {#1}_2^2 & {#1}_3^2 & \dots & {#1}_{#2}^2\\
\vdots & \vdots & \vdots & \ddots & \vdots\\
{#1}_1^{#2-1} & {#1}_2^{#2-1} & {#1}_2^{#2-1} & \dots & {#1}_{#2}^{#2-1}\\
\end{matrix}
\right|
}
\def\vandermondet#1.#2.{\left|
\begin{matrix}
1 & {#1}_1   & {#1}_1^2 & \dots & {#1}_1^{#2-1}\\
1 & {#1}_2   & {#1}_2^2 & \dots & {#1}_2^{#2-1}\\
1 & {#1}_3   & {#1}_3^2 & \dots & {#1}_3^{#2-1}\\
\vdots & \vdots & \vdots & \ddots & \vdots\\
1 & {#1}_{#2}& {#1}_{#2}^2 & \dots & {#1}_{#2}^{#2-1}\\
\end{matrix}
\right|
}
\def\gr#1.#2.{\mathbb{G}(#1,#2)}
\def\alist#1.#2.#3.{{#1}_1 #2 {#1}_2 #2\dots #2 {#1}_{#3}}
\def\zlist#1.#2.#3.{#1_0 #2 #1_1 #2\dots #2 #1_{#3}}
\def\lomitlist30#1.#2.#3.{{#1}_0,{#1}_1 #2 \dots #2\hat {{#1}_i} #2\dots #2 {#1}_{#3}}
\def\lmap#1.#2.#3.{#1 \overset{#2}{\longrightarrow} #3}
\def\mes#1.#2.#3.{#1 \longrightarrow #2 \longrightarrow #3}
\def\ses#1.#2.#3.{0\longrightarrow #1 \longrightarrow #2 \longrightarrow #3 \longrightarrow 0}
\def\les#1.#2.#3.{0\longrightarrow #1 \longrightarrow #2 \longrightarrow #3}
\def\res#1.#2.#3.{#1 \longrightarrow #2 \longrightarrow #3\longrightarrow 0}
\def\Hi#1.#2.#3.{\text {Hilb}^{#1}_{#2}(#3)}
\def\ten#1.#2.#3.{#1\underset {#2}{\otimes} #3}
\def\lomitlist30#1.#2.#3.{{#1}_0 #2 {#1}_1 #2 \dots #2 \hat {{#1}_i} #2 \dots #2 {#1}_{#3}}
\def\mderiv#1.#2.#3.{\frac {d^{#3} #1}{d #2^{#3}}}
\def\Hom{\operatorname{Hom}}
\def\kod{\operatorname{kod}}
\def\deg{\operatorname{deg}}
\def\rest{\operatorname{res}}
\def\e{\Cal E}
\def\e1{E_1}
\def\e2{E_2}
\def\mapdown#1{\big\downarrow\rlap{$\vcenter{\hbox{$\scriptstyle#1$}}$}}
\def\mapse#1{
{\vcenter{\hbox{$\mathop{\smash{\raise1pt\hbox{$\diagdown$}\!\lower7pt
\hbox{$\searrow$}}\vphantom{p}}\limits_{#1}\vphantom{\mapdown{}}$}}}}
\def\VR#1.{height#1pt&\omit&&\omit&&\omit&&\omit&&\omit&\cr}
\def\VRT#1.{height#1pt&\omit&&\omit&\cr}
\begin{document}

\title{Symplectic generic complex structures on $4$-manifolds with $b_+=1$}
\date{\today}

\author{Paolo Cascini}
\address{Department of Mathematics\\
Imperial College London\\
London SW7 2AZ, UK}
\email{p.cascini@imperial.ac.uk}
\author{Dmitri Panov}
\address{Department of Mathematics\\
King's College London\\
Strand,
London
WC2R 2LS, UK}
\email{dmitri.panov@kcl.ac.uk}

\begin{abstract}
We study symplectic structures on K\"ahler surfaces with $p_g=0$.
We give an example of a projective surface which admits a symplectic structure which is not compatible with any K\"ahler metric.
\end{abstract}

\maketitle

\section{Introduction}
The main purpose of this note is to give a negative answer to a question raised by Tian-Jun Li \cite{Li08}:

\begin{question}\label{question}Let $X$ be a closed, smooth, oriented $4$-manifold
which underlies a K\"ahler surface such that $p_g(X)=0$.  Does $X$ admit a symplectic generic complex structure?
\end{question}

A complex structure $J$ on $X$ is called {\em symplectic generic} if for any symplectic form $\omega$ of $X$ such that $-c_1(X,\omega)$ coincides with the canonical class $K_J$ of $J$, there exists a K\"ahler form $\omega'$ cohomologous to $\omega$.

One of the main motivations for this question is the fact that, by a result of Biran \cite{Biran99}, the existence of a symplectic generic complex structure on any rational $4$-manifold implies the famous Nagata's conjecture (see \cite{Li08} for more details).
Recall that a smooth $4$-manifold $X$ is said to be  {\em rational} if it is diffeomorphic to either $S^2\times S^2$ or $\mathbb C\mathbb P^2 \# k \overline{\mathbb C \mathbb P^2}$, for some $k\ge 0$.

On the other hand, if $X$ is the $4$-manifold underlying a smooth minimal projective surface of general type (i.e. with big and nef canonical line bundle) then there exists a symplectic form inside the class of the canonical line bundle of $X$ (see \cite{Cat09,STY02}). Therefore, if $p_g(X)=0$, the existence of a symplectic generic complex structure on $X$ would, in particular, imply the existence of a K\"ahler-Einstein metric with negative curvature on $X$,  by the result of Aubin and Yau.
For example,
Catanese and LeBrun \cite{CL97} showed the existence
of a K\"ahler-Einstein metric with negative curvature on the generic Barlow surface, which is a projective surface of general type homeomorphic to  $\mathbb C\mathbb P^2 \# 8 \overline{\mathbb C \mathbb P^2}$. But the question remains a hard problem in general, as a classification of the projective surfaces with zero genus is still beyond our reach (see the recent survey \cite{BCP10} for an updated account).

Our example is obtained by considering the $4$-manifold $X=(\Sigma\times S^2)\# \overline{\mathbb C \mathbb P^2}$, where $\Sigma$ is a Riemannian surface of genus one. We show the existence of a symplectic form on $X$ which is not cohomologous to any K\"ahler form on $X$, with respect to any complex structure $J$. From an algebraic geometric point of view, this corresponds to saying that the Seshardi constant of a suitable ample class on any uniruled projective surface over an elliptic curve is not maximal (e.g. see \cite{G06}). In particular, it follows that $X$ does not admit a symplectic generic complex structure.

Moreover, we describe a  minimal surface of general type, for which the underlying manifold does not admit a symplectic generic complex structure. The construction relies on a recent result by Bauer and Catanese \cite{BC09}.

Note that both these examples have infinite fundamental group.

{\bf Acknowledgements}. We would like to thank Valery Alexeev and Burt Totaro four useful
conversations. The first author is partially supported by an EPSRC grant. The second author is
supported by a Royal Society University Research Fellowship.

\section{Preliminary results}

In this section, we recall some basic definition and well known facts about the space of symplectic forms on a smooth $4$-manifold.

Given a closed smooth oriented $4$-manifold $X$, we consider the  {\em positive cone} of $X$, which is defined as the set
$$\mathcal P_X=\{a \in H^2(X,\mathbb R)\mid a^2>0\}.$$

Moreover, we denote by $\Omega_X$ the space of orientation-compatible symplectic forms on $X$. Let
$$\mathcal C_X=\{[\omega]\mid \omega \in \Omega_X\}\subseteq H^2(X,\mathbb R)$$ and let $K_\omega=-c_1(X,\omega)$ be the canonical  class of $\omega\in\Omega_X$. We denote by $\mathcal K_X$ the union of all elements $K_\omega$ in $H^2(X,\mathbb Z)$, where $\omega\in \Omega_X$.
For any $K\in \mathcal K_X$,  let
$$\mathcal C_{(X,K)}=\{[\omega]\in \mathcal C_X\mid K_\omega=K  \}.$$
If $K$ is a torsion class, then we replace $C_{(X,K)}$ by its intersection with the component of $\mathcal P$ which contains any K\"ahler class.
Note that a complex structure $J$ on $X$ is symplectic generic if $\mathcal C_J=\mathcal C_{(X,K_J)}$, where $\mathcal C_J$ denotes the K\"ahler cone of $J$ and $K_J$ is the canonical class of $J$.

Let $\mathcal E_X$ be the set of cohomology classes whose Poincar\'e
dual are represented by smoothly embedded spheres of self-intersection $-1$.
In particular, $X$ is said to be  {\em minimal} if $\mathcal E_X$ is empty.
Moreover, for any  $K\in H^2(X,\mathbb Z)$, we denote
$$\mathcal E_{(X,K)}=\{E\in \mathcal E_X\mid E\cdot K=-1 \}.$$

The following result by Li and Luo \cite{LiLuo01} will play an important role:
\begin{theorem}\label{l_ll}
Let $(X,\omega)$ be a closed, symplectic $4$-manifold with $b_+(X)=1$.

Then
$$\mathcal C_X=\{a\in \mathcal P_X \mid a\cdot E \neq 0 \quad \text{for all } E\in \mathcal E_X  \}.$$
Let $K\in \mathcal K_X$. Then $\mathcal C_{(X,K)}$ is contained in one of the
components of $\mathcal P_X$, denoted by $\mathcal P_{(X,K)}$. Moreover,
$$\mathcal C_{(X,K)}=\{a\in \mathcal P_{(X,K)}\mid a \cdot E>0 \quad \text{for all } E\in \mathcal E_{(X,K)}\}.$$
\end{theorem}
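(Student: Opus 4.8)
The plan is to derive both identities from Taubes' theorem identifying Seiberg--Witten and Gromov invariants, the classification of minimal symplectic $4$-manifolds with $b_+=1$, and the wall-crossing and inflation results of Li and Liu. Two elementary observations are used throughout: in dimension four $\omega$ and $-\omega$ induce the same orientation, so $\omega\mapsto-\omega$ is an involution of $\Omega_X$ with $K_{-\omega}=-K_\omega$ (hence $\mathcal C_{(X,-K)}=-\mathcal C_{(X,K)}$, and it suffices to analyse one component of $\mathcal P_X$ at a time); and, since $K_\omega=-c_1(X,\omega)$ is an integral class varying continuously in $\omega$, the map $\omega\mapsto K_\omega$ is locally constant, hence constant on each path-component of $\Omega_X$.

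\emph{The inclusions ``$\subseteq$''.} If $a=[\omega']$ with $\omega'\in\Omega_X$, then $a^2=\int_X\omega'\wedge\omega'>0$ since $\omega'$ is orientation-compatible, so $a\in\mathcal P_X$. Given $E\in\mathcal E_X$, Taubes' theorem in the $b_+=1$ chamber determined by $\omega'$ (equivalently the Li--Liu form of $\mathrm{SW}=\mathrm{Gr}$) yields an embedded $\omega'$-symplectic sphere representing either $E$ or $-E$; as such a sphere has positive $\omega'$-area, $a\cdot E\ne0$, which gives ``$\subseteq$'' for the first formula. If in addition $E\cdot K_{\omega'}=-1$, the class $-E$ cannot support an embedded genus-zero symplectic surface (adjunction fails), so the representative lies in $E$ and $a\cdot E>0$; granting that $\mathcal C_{(X,K)}$ lies in a single component $\mathcal P_{(X,K)}$ of $\mathcal P_X$ --- which follows once the ``$\supseteq$'' construction exhibits $\mathcal C_{(X,K)}$ as the image of a connected set of symplectic forms --- this gives ``$\subseteq$'' for the second formula.

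\emph{The inclusions ``$\supseteq$''.} Let $a\in\mathcal P_X$ with $a\cdot E\ne0$ for all $E\in\mathcal E_X$; replacing $\omega$ by $\pm\omega$, assume $a$ lies in the component of $\mathcal P_X$ containing $[\omega]$. Induct on $b_-(X)$. If $\mathcal E_X=\emptyset$, then $X$ is minimal, and by the classification (Taubes, A.-K.~Liu, Li--Liu) it is diffeomorphic to a rational surface, an irrational ruled surface, or a minimal symplectic manifold with $K_\omega^2\ge0$ and $K_\omega\cdot[\omega]\ge0$. For rational and ruled manifolds $\mathcal C_X=\mathcal P_X$ by McDuff and Li--Liu; in the remaining case Taubes' theorem (with its $b_+=1$ wall-crossing refinement) supplies $\omega$-symplectic surfaces --- for instance in positive multiples of $K_\omega$, or in suitable fibre and section classes --- and inflating along them, together with $\omega\mapsto-\omega$, sweeps out $\mathcal P_X$; since there are no $(-1)$-classes, $\mathcal C_X=\mathcal P_X$. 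If $E_0\in\mathcal E_X$, represent $\pm E_0$ by an embedded $\omega$-symplectic sphere and symplectically blow it down to $(X',\omega')$ with $b_+(X')=1$, $b_-(X')=b_-(X)-1$ and $X=X'\#\overline{\mathbb C\mathbb P^2}$. Li--Liu's description of the symplectic cone under blow-up, together with the inductive hypothesis for $X'$ and one further inflation along the new exceptional sphere, returns precisely $\{a\in\mathcal P_X:a\cdot E\ne0 \text{ for all } E\in\mathcal E_X\}$. For the refined formula one also tracks canonical classes: by the $b_+=1$ wall-crossing formula, $K_{\omega''}$ for a symplectic form whose class lies in $\mathcal P_{(X,K)}$ is determined by which chamber of the arrangement $\{E^\perp:E\in\mathcal E_X\}$ that class occupies, and the hypothesis $a\cdot E>0$ for $E\in\mathcal E_{(X,K)}$ places $a$ in the chamber of forms with canonical class $K$; carrying this through the induction --- under which $\mathcal E_{(X,K)}$ transforms compatibly --- produces a symplectic form in the class $a$ with canonical class exactly $K$.

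\emph{Main obstacle.} The substantive difficulty is the minimal case beyond rational and ruled surfaces: one must know that inflating along the symplectic surfaces furnished by Taubes' theorem --- whose existence at $b_+=1$ is itself governed by wall-crossing --- exhausts the whole positive cone, and that the interaction of $(-1)$-classes and chambers is consistent under iterated blow-downs. In short, everything rests on the $b_+=1$ refinement of the $\mathrm{SW}=\mathrm{Gr}$ correspondence and on the Li--Liu inflation lemma.
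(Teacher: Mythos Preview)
The paper does not actually prove this theorem: its entire proof is the single line ``See \cite[Theorem 4]{LiLuo01} and \cite[Theorem 3.11]{Li08}.'' The result is imported wholesale from the literature as a tool.

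Your proposal, by contrast, outlines the substance of the argument behind those references --- the $b_+=1$ refinement of $\mathrm{SW}=\mathrm{Gr}$ for the inclusions ``$\subseteq$'' (so that every smooth $(-1)$-sphere class is, up to sign, represented by a symplectic sphere and hence has nonzero symplectic area), and the Li--Liu inflation lemma together with induction on $b_-$ via symplectic blow-down for ``$\supseteq$'' --- and you correctly flag the minimal, non-rational/ruled base case as the crux. Two places deserve tightening. First, for the refined formula you implicitly need $\mathcal E_X=\mathcal E_{(X,K)}\cup(-\mathcal E_{(X,K)})$, so that positivity on $\mathcal E_{(X,K)}$ already forces $a\cdot E\ne0$ for every $E\in\mathcal E_X$; this follows from adjunction once each $E\in\mathcal E_X$ is known to have a symplectic sphere representative up to sign, and is worth stating explicitly. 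Second, your ``chamber'' justification that the constructed symplectic form has canonical class exactly $K$ is vague; the cleaner route is to note that the set on the right-hand side of the second formula is convex, hence path-connected, and already contains the class of some $\omega_0$ with $K_{\omega_0}=K$, so a deformation of symplectic forms along a path inside it (produced by inflation) keeps the canonical class fixed. These are sharpenings rather than genuine gaps: your sketch is broadly correct and far more informative than the paper's one-line citation.
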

\begin{proof}
See \cite[Theorem 4]{LiLuo01} and \cite[Theorem 3.11]{Li08}.
\end{proof}

\begin{lemma}\label{l_knm}
Let $(X,J)$ be a minimal complex surface with $b_+(X)=1$ and  which admits a K\"ahler class $[\omega]\in \mathcal C_J$. Then $J$ is a symplectic generic complex structure if and only if any $J$-holomorphic curve in  $X$ has non-negative self-intersection.
\end{lemma}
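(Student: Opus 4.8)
The plan is to play the description of $\mathcal C_{(X,K_J)}$ in Theorem~\ref{l_ll} against the Nakai--Moishezon criterion for the K\"ahler cone. Write $K=K_J$. Since $X$ is K\"ahler with $b_+(X)=1$ it has $p_g(X)=0$, so $H^2(X,\mathbb R)=H^{1,1}(X)$ and $X$ is projective; hence $\mathcal C_J=\{a\in\mathcal P_{(X,K)}\mid a\cdot C>0\ \text{for every irreducible curve }C\}$, and by the light cone lemma the curves with $C^2\ge 0$ are automatically positive on $\mathcal P_{(X,K)}$, so only the irreducible curves with $C^2<0$ impose a condition. On the other hand a K\"ahler form is a symplectic form whose canonical class is $K$, so $\mathcal C_J\subseteq\mathcal C_{(X,K)}$ for free. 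Thus $J$ is symplectic generic if and only if $\mathcal C_{(X,K)}\subseteq\mathcal C_J$. This already settles $(\Leftarrow)$: if no $J$-holomorphic curve has negative self-intersection then $\mathcal C_J=\mathcal P_{(X,K)}$, which contains $\mathcal C_{(X,K)}$ by Theorem~\ref{l_ll}, so the two cones coincide.

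For $(\Rightarrow)$ I argue by contraposition: given an irreducible curve $D$ with $D^2<0$ I exhibit a class in $\mathcal C_{(X,K)}\setminus\mathcal C_J$. The key object is a nef and big class $a_0$ orthogonal to $D$. One can take $a_0=H-\tfrac{H\cdot D}{D^2}D$, the orthogonal projection of an ample class $H$ onto $D^\perp$; a short computation gives $a_0^2>0$, $a_0\cdot D=0$, and $a_0\cdot D'>0$ for every irreducible $D'\ne D$ (when $K$ is nef and big, or $K\cdot D>0$, the rational class $K-\tfrac{K\cdot D}{D^2}D$ serves equally well and has the advantage of being integral up to scale). Then $a_0\in\mathcal P_{(X,K)}$ and $a_0\cdot D=0$, so $a_0\notin\mathcal C_J$; and since $a_0$ is a limit of K\"ahler classes and $\mathcal C_J\subseteq\mathcal C_{(X,K)}$, Theorem~\ref{l_ll} forces $a_0\cdot E\ge 0$ for every $E\in\mathcal E_{(X,K)}$. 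If $a_0\cdot E>0$ for all such $E$, then $a_0\in\mathcal C_{(X,K)}$ by Theorem~\ref{l_ll}, and we are done. In general one must first move $a_0$ slightly, by a general-position argument inside the facet $\overline{\mathcal C_J}\cap D^\perp$ of the nef cone, to avoid the hyperplanes $E^\perp$ for $E\ne -D$ (using that $D$, having negative square, spans an extremal ray of the Mori cone so this facet is genuinely full-dimensional in $D^\perp$), and then replace $a_0$ by a perturbation $a_0+\varepsilon D$, which pairs positively with $-D$ and, for suitable $\varepsilon>0$, with all remaining exceptional classes.

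The real work is this last step, because $\mathcal E_{(X,K)}$ can be infinite (for instance when $X$ is homeomorphic to $\mathbb{CP}^2\#9\overline{\mathbb{CP}^2}$), so one needs a \emph{single} $\varepsilon>0$ that keeps $a_0+\varepsilon D$ positive on all of $\mathcal E_{(X,K)}$ simultaneously. This follows from a Hodge index estimate: since $a_0^2>0$ and $a_0\cdot D=0$, applying the index inequality to $E+tD$ bounds $|D\cdot E|$ in terms of $a_0\cdot E$, and combining this with the integrality of $a_0\cdot E$ (so $a_0\cdot E\ge c>0$ whenever it is nonzero) and the finiteness of $\{E\in\mathcal E_{(X,K)}\mid a_0\cdot E\le N\}$ for each $N$ (finitely many vectors of bounded norm in the negative definite lattice $a_0^{\perp}$), one gets a uniform positive lower bound for $a_0\cdot E/|D\cdot E|$ over the relevant $E$. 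I expect the extremal-ray/facet claim and these uniformity estimates to be the only genuine obstacles; everything else is formal manipulation of the two cone descriptions and the inclusion $\mathcal C_J\subseteq\mathcal C_{(X,K)}$.
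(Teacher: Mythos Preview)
Your $(\Leftarrow)$ direction matches the paper's: Nakai--Moishezon reduces $\mathcal C_J$ to $\mathcal P_{(X,K)}$ when there are no negative curves, and Theorem~\ref{l_ll} then forces $\mathcal C_J=\mathcal C_{(X,K)}$.

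For $(\Rightarrow)$ you have overlooked the hypothesis that $X$ is \emph{minimal}. By definition this means $\mathcal E_X=\emptyset$, hence $\mathcal E_{(X,K)}=\emptyset$, and Theorem~\ref{l_ll} gives $\mathcal C_{(X,K)}=\mathcal P_{(X,K)}$ outright. So your class $a_0$ already lies in $\mathcal C_{(X,K)}\setminus\mathcal C_J$ with no further work: everything you flag as ``the real work''---perturbing off the hyperplanes $E^\perp$, the facet argument, the Hodge-index bound, the uniformity over a possibly infinite $\mathcal E_{(X,K)}$---is vacuous here. (Note that your motivating example $\mathbb{CP}^2\#9\overline{\mathbb{CP}^2}$ is not minimal, so it is not an instance of this lemma.) As written, your proposal leaves those steps as acknowledged gaps; once you use minimality, they disappear and the proof is complete.

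The paper's argument exploits minimality in the same way but with an even more direct construction: it pushes the given K\"ahler class along the ray $a(t)=[\omega]+t\,PD(C)$, observes that $a(t)\cdot C=v-tm$ (with $v=[\omega]\cdot C>0$, $m=-C^2>0$) becomes negative for $t>v/m$, while $a(t)^2=[\omega]^2+2tv-t^2m$ is still positive slightly past $t=v/m$ since $a(v/m)^2=[\omega]^2+v^2/m>0$. Minimality then puts $a(T)\in\mathcal P_{(X,K)}=\mathcal C_{(X,K)}$ for some $T>v/m$, while $a(T)\cdot C<0$ excludes it from $\mathcal C_J$. Your orthogonal-projection class $a_0$ is essentially the point $a(v/m)$ on this ray (up to the choice of starting ample class), so the two arguments are the same once minimality is invoked.
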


\begin{proof}
By the K\"ahler Nakai-Moishezon criterion \cite{Buch99,L99}, if the K\"ahler cone $\mathcal C_J$ is not empty then it coincides with the set of elements in $\mathcal P_{(X,K_J)}$ which are positive on every $J$-holomorphic curve with negative self-intersection.
Thus, if there is no such a curve on $X$, it follows that $J$ is a symplectic generic complex structure.

Let us assume now that $C$ is a $J$-holomorphic curve with negative self-intersection. Let $v=\omega(C)$ and $m=-C^2$ and define $a(t)=[\omega] + t PD(C)\in H^2(X,\mathbb R)$ for any  $t\ge 0$. Then, since\
$$a(t)^2=[\omega]^2+2t\omega(C)+t^2C^2> 2tv - t^2m,$$
it follows that there exists $T>v/m$ such that $a(T)\in \mathcal P_{(X,K_J)}$.  Since $X$ is minimal, Theorem \ref{l_ll} implies that $a(T)$ is represented by a symplectic form $\omega_T$ such that $K_{\omega_T}=K_J$. On the other hand, $\omega_{T}(C)=v-Tm<0$, thus $a(T)$ is not a K\"ahler class. In particular, $J$ is not a symplectic generic complex structure.
\end{proof}

By the K\"ahler Nakai-Moishezon criterion and Theorem \ref{l_ll}, it also follows that a positive answer to Question \ref{question} in the case
of rational $4$-manifolds is equivalent to the following  conjecture (Harbourne-Hirschowitz):
any integral curve with negative self-intersection on
the blow-up of $\mathbb CP^2$ at a set of points in very general position
is a smooth rational curve with self-intersection $-1$.

\section{Ruled Manifolds}

In this section, we show the existence of a smooth uniruled complex manifold, which does admit a symplectic generic complex structure.
\begin{lemma}\label{l_curve}
Let $\Sigma$ be an elliptic curve, and let $p\colon \map Y.\Sigma.$ be a minimal ruled surface over $\Sigma$, such that the parity of the intersection pairing on $H^2(Y,\mathbb Z)$ is odd.   Let $X$  be the blow-up of $Y$ at one point $\eta\in Y$. Let $k$ be the canonical class of $X$,
and let $e$ be the class of the exceptional divisor.

Then the class
$e-2k$
contains an effective curve.
\end{lemma}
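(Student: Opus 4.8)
The plan is to push the problem down to the elliptic curve $\Sigma$. Write $\pi\colon X\to Y$ for the blow-up. Then $k=K_X=\pi^*K_Y+e$, so
$$e-2k=\pi^*(-2K_Y)-e ,$$
and since $\pi_*\mathcal O_X(-e)$ is the ideal sheaf $\mathcal I_\eta$ of $\eta$ in $Y$, the projection formula gives
$$H^0\bigl(X,\pi^*(-2K_Y)-e\bigr)=H^0\bigl(Y,\mathcal O_Y(-2K_Y)\otimes\mathcal I_\eta\bigr),$$
i.e. the space of sections of $-2K_Y$ vanishing at $\eta$. Passing through a point is one linear condition, so this space is non-zero whenever $h^0(Y,-2K_Y)\ge 2$; this inequality is what I would aim to prove, and it yields the conclusion for \emph{every} choice of $\eta$.

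To control $h^0(Y,-2K_Y)$ I would realise $Y=\mathbb P(\mathcal E)$ for a rank-two bundle $\mathcal E$ on $\Sigma$. Since $K_\Sigma$ is trivial, the relative canonical bundle formula gives $K_Y=\mathcal O_{\mathbb P(\mathcal E)}(-2)\otimes p^*\det\mathcal E$, hence $-2K_Y=\mathcal O_{\mathbb P(\mathcal E)}(4)\otimes p^*(\det\mathcal E)^{-2}$. As $\mathcal O_{\mathbb P(\mathcal E)}(4)$ has vanishing $H^1$ on the fibres, the projection formula gives
$$h^0(Y,-2K_Y)=h^0(\Sigma,\mathcal G),\qquad \mathcal G:=\mathrm{Sym}^4\mathcal E\otimes(\det\mathcal E)^{-2},$$
a bundle of rank $5$ and degree $0$ on $\Sigma$.

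Now I would bring in the parity hypothesis. In the basis given by the minimal section $C_0$ and a fibre $F$, the intersection form is $\left(\begin{smallmatrix}C_0^2&1\\ 1&0\end{smallmatrix}\right)$, so odd parity is exactly the condition that $C_0^2$ be odd. By Atiyah's classification of vector bundles on an elliptic curve (together with the bound $C_0^2\le 1$ for the minimal section), the normalised $\mathcal E$ is then either $\mathcal O_\Sigma\oplus L$ with $\deg L=C_0^2$ negative and odd, or indecomposable of degree $1$ (the case $C_0^2=1$). In the first case — which already covers the ruled surfaces relevant to the $4$-manifold of Section~3 — set $m:=-\deg L\ge 1$; then $\mathcal G=\bigoplus_{j=0}^{4}L^{\,j-2}$, and since on an elliptic curve a line bundle of positive degree $d$ has $h^0=d$ while one of negative degree has $h^0=0$,
$$h^0(\Sigma,\mathcal G)=h^0(L^{-2})+h^0(L^{-1})+h^0(\mathcal O_\Sigma)=2m+m+1=3m+1\ge 4 .$$
In the indecomposable case I would run the same argument using the filtration of $\mathrm{Sym}^4\mathcal E$ induced by a maximal-degree sub-line-bundle of $\mathcal E$ to exhibit at least two trivial subquotients of $\mathcal G$. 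Either way $h^0(Y,-2K_Y)\ge 2$, which finishes the proof.

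The step I expect to be the genuine obstacle is this last lower bound on $h^0(\Sigma,\mathcal G)$. Because $\mathcal G$ has degree $0$ its Euler characteristic vanishes, so Riemann--Roch produces no sections at all; indeed the bound fails for a general minimal ruled surface over an elliptic curve — for instance for $Y=\mathbb P(F_2)$ with $F_2$ the Atiyah bundle, where $\mathcal G\cong F_5$ has only a one-dimensional space of sections, consistently with $\mathbb P(F_2)$ having \emph{even} parity. It is precisely the odd-parity hypothesis, by forcing the splitting $\mathcal E\cong\mathcal O_\Sigma\oplus L$ (or the degree-one indecomposable normal form), that makes enough sections appear, coming from the summands $\mathcal O_\Sigma,L^{-1},L^{-2}$ of $\mathcal G$.
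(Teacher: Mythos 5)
Your reduction is sound: since $k=\pi^*K_Y+e$, the class in question is $e-2k=\pi^*(-2K_Y)-e$, so it is effective for every choice of $\eta$ as soon as $h^0(Y,-2K_Y)\ge 2$; and the identification $p_*(-2K_Y)=\mathrm{Sym}^4\mathcal E\otimes(\det\mathcal E)^{-2}=:\mathcal G$, together with the list of odd-parity normal forms ($\mathcal E=\mathcal O_\Sigma\oplus L$ with $\deg L<0$ odd, or $\mathcal E$ indecomposable of degree one), agrees with the paper's starting point. The split case is handled completely and correctly, giving $h^0(Y,-2K_Y)=3m+1\ge 4$.

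The genuine gap is the indecomposable case, and it cannot be set aside: your parenthetical claim that the split case ``already covers the ruled surfaces relevant to Section 3'' is false. An elementary transformation changes $\deg\mathcal E$ by one, so the only odd-parity minimal model available when $\mathcal E$ is indecomposable of degree one is $Y=S^2\Sigma$ itself; this is a bona fide complex structure on $(\Sigma\times S^2)\#\overline{\mathbb C\mathbb P}^2$ and must be treated, and the lemma as stated covers it in any case. Moreover the filtration argument you sketch does not work there. A maximal sub-line-bundle $M\subset\mathcal E$ has degree $0$ with quotient of degree $1$, so the induced filtration of $\mathcal G$ has graded pieces of degrees $-2,-1,0,1,2$: there is only \emph{one} trivial subquotient, not two, and it sits in the middle of the filtration, where it contributes nothing to $h^0$ --- only the sub-bundle end of a filtration gives a lower bound for $h^0$, and here that end has degree $-2$ and no sections; the filtration yields only the upper bound $h^0(\mathcal G)\le 4$. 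The inequality $h^0(Y,-2K_Y)\ge 2$ is nevertheless true in this case (in fact it is an equality), but it requires a separate argument: for instance Atiyah's decomposition of $\mathrm{Sym}^4$ of the degree-one bundle, or the identification $H^0(S^2\Sigma,-2K_Y)=H^0(\Sigma\times\Sigma,\mathcal O(2\Delta))^{\mathbb Z/2\mathbb Z}$ with $\Delta$ the diagonal, or --- as the paper does --- the explicit pencil of curves $C_t=\{[x,t+x]\}$ on $S^2\Sigma$, all lying in $|-2K_Y|$ and sweeping out the surface, so that some member passes through $\eta$. Once you supply such an argument, your proof becomes a clean cohomological alternative to the paper's explicit-curve construction.
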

\begin{proof}

By Atiyah's classification \cite{Atiyah57} of rank 2 vector bundles on an elliptic
curve, it follows that $Y=\mathbb P(\mathcal E)$ where $\mathcal E$ is either
the indecomposable vector bundle  contained in the sequence
$$0\to \ring \Sigma.\to \mathcal E \to \ring \Sigma.(p)\to 0$$
for some $p\in \Sigma$  or $\mathcal E=\ring \Sigma.\oplus \ring \Sigma.(L)$ where $L$ is a line bundle of odd degree $m<0$.

Let us consider first the case of the indecomposable vector bundle. It is known (e.g. see \cite{CC93}) that in this case $\mathbb P(\mathcal E)$ is isomorphic to the symmetric product $S^2\Sigma$ of the elliptic curve $\Sigma$, i.e. the quotient of $\Sigma\times \Sigma$ by the natural action of $\mathbb Z/2\mathbb Z$. We will denote by $[x,y]\in S^2\Sigma$ the class of an element $(x,y)\in \Sigma\times \Sigma$.
Note that the projection $p\colon\map S^2\Sigma.\Sigma.$ is defined by
 $p([x,y])=x+y$.
Consider the family of curves
$$C_t=\{[x,t+x]\mid x\in \Sigma\} \qquad\text{for any }t\in \Sigma.$$
If $t\in \Sigma$ is not a $2$-torsion point, then the curve $C_t$ is a smooth elliptic curve. Otherwise, $C_t$ is a non-reduced elliptic curve.
Note that, for any $s,t\in \Sigma$, we have $C_t=C_s$ if and only if $t=s$ or $t=-s$ and
$C_t$ and $C_s$ are disjoint otherwise. It follows that $C_t^2=0$. Moreover, given $s,t\in \Sigma$, there exist exactly $4$ points $x\in \Sigma$ such that $2x+t=s$. Thus, if $t$ is a general point in $\Sigma$, then the general fiber of $p$ meets $C_t$ in exactly $4$ points. Let $f$ be the numerical class of the pull-back of the general fiber of $p$ in $X$ and let $\delta$ be the numerical class of the pull-back of $C_t$. Then
$$\delta^2=C^2_t=0\qquad \delta\cdot e=0\qquad\text{and}\qquad \delta\cdot f=4.$$
By adjunction, we have that $k\cdot \delta = -\delta^2=0$. Similarly, we have $k\cdot e=-1$ and $k\cdot f=-2$. Moreover, since $e,f$ and $k$ are a basis of $H^2(X,\mathbb Q)$, it follows easily that
$\delta=2e-2k$. For any point $\eta\in S^2\Sigma$ there exists $t\in \Sigma$ such that
$\eta\in C_t$. If $X$ is the blow-up of $Y$ at $\eta$ and $C'_t$ is the proper transform of $C_t$ in $X$, then $C'_t$ is in the class of $(2-q)e-2k$, where $q\ge 1$ is the multiplicity of $C_t$ at $\eta$. In particular, the class $e-2k$ contains an effective curve, as claimed.

Let us consider now the case of a decomposable vector bundle $\mathcal E=\mathcal O_\Sigma\oplus \mathcal O_\Sigma(L)$ where $L$ is a line bundle on $\Sigma$ of odd degree $m<0$. Then, there exists an holomorphic section $C$ in $Y$ such that $C^2=m$. If $\xi$ is the numerical class of the pull-back of $C$ in $X$, it follows easily that $2\xi= e+mf-k$, where $f$ is the pull-back of the general fiber of $p$. In particular, $e-2k=4\xi + (-2mf -e)$ is the class of a (possibly not irreducible) effective curve in $X$.
\end{proof}

\begin{remark}Note that the uniruled surface which is the projectivization of the decomposable vector bundle can be obtained as a deformation of the projectivization of the indecomposable one. Thus, in the proof of the previous lemma, the second case would follow immediately from the first one.
\end{remark}

\begin{lemma}\label{kodimension} A complex surface $X$ homeomorphic to $(\Sigma\times S^2)\# \overline{\mathbb C \mathbb P^2}$,
is bi-holomorphic to a blow up at a single point of a minimal ruled surface $Y$ over
an elliptic curve, such that the intersection pairing on $H^2(Y,\mathbb Z)$ is odd.
\end{lemma}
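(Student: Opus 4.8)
The plan is to read off the complex type of $X$ from the Enriques--Kodaira classification, using the homeomorphism hypothesis only through the invariants $b_1,\ \chi,\ \sigma$ and $\pi_1$, and then to correct the parity of the ruled surface by a single elementary transformation. First I would record the invariants. Since $X$ is homeomorphic to $(\Sigma\times S^2)\#\overline{\mathbb{C}\mathbb{P}^2}$ we have $b_1(X)=2$, $e(X)=1$, $\sigma(X)=-1$, and by van Kampen $\pi_1(X)\cong\pi_1(\Sigma)\cong\mathbb{Z}^2$ (the summand $\overline{\mathbb{C}\mathbb{P}^2}$ is simply connected, and blowing up a point on a complex surface does not change $\pi_1$). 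As $b_1$ is even, $X$ is K\"ahler. From $c_1^2(X)=2e(X)+3\sigma(X)=-1$, $c_2(X)=e(X)=1$ and Noether's formula, $\chi(\mathcal O_X)=0$, so $q(X)=1$ and $p_g(X)=0$.

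Next I would prove $\kappa(X)=-\infty$. Let $X_{\min}$ be the minimal model, obtained by contracting $k\ge 0$ $(-1)$-curves, so $c_1^2(X_{\min})=k-1$, $c_2(X_{\min})=1-k$, while $b_1(X_{\min})=2$ and $X_{\min}$ is still K\"ahler. If $\kappa(X)\ge 0$ then $c_1^2(X_{\min})\ge 0$ and $c_2(X_{\min})\ge 0$, which forces $k=1$ and $c_1^2(X_{\min})=c_2(X_{\min})=0$, hence $\kappa(X_{\min})\in\{0,1\}$. A minimal K\"ahler surface with $\kappa=0$ and $b_1=2$ is bielliptic, hence an unramified quotient of an abelian surface by a finite group, so $\pi_1(X_{\min})$ contains $\mathbb{Z}^4$, which is impossible inside $\mathbb{Z}^2$. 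A minimal surface with $\kappa=1$ carries a relatively minimal elliptic Iitaka fibration $f\colon X_{\min}\to B$; since $\kappa=1$ and $\deg\!\big(R^1 f_*\mathcal O_{X_{\min}}\big)^{\vee}=\chi(\mathcal O_{X_{\min}})=0$, the base orbifold (the genus of $B$ together with the multiplicities of the multiple fibres) has negative orbifold Euler characteristic, so $\pi_1^{\mathrm{orb}}(B)$ is an infinite non-elementary Fuchsian group; as it is a quotient of $\pi_1(X_{\min})$, the latter is non-abelian, again contradicting $\pi_1(X_{\min})\cong\mathbb{Z}^2$. Therefore $\kappa(X)=-\infty$, and since $q(X)=1>0$ the surface is not rational, so $X$ is a ruled surface over a smooth curve of genus $q(X)=1$ and $X_{\min}$ is a minimal (geometrically) ruled surface over the elliptic curve $E=\mathrm{Alb}(X)$. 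Because $c_2$ of such a surface equals $e(\mathbb{P}^1)\,e(E)=0$, we again get $k=1$, i.e. $X=\mathrm{Bl}_p(X_{\min})$ for a single point $p$.

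Finally I would arrange the parity. The intersection form of a geometrically ruled surface $\mathbb{P}(\mathcal F)\to E$ is odd precisely when $\deg\mathcal F$ is odd. If $X_{\min}$ already has odd form, take $Y=X_{\min}$. Otherwise perform the elementary transformation centred at $p$: besides the exceptional curve $\mathbf e$ of $X\to X_{\min}$, the strict transform $\ell'$ in $X$ of the ruling fibre of $X_{\min}\to E$ through $p$ is a smooth rational $(-1)$-curve meeting $\mathbf e$ transversally once; contracting $\ell'$ yields a surface $Y$ which is again a $\mathbb{P}^1$-bundle over $E$, with $X=\mathrm{Bl}_q(Y)$ the blow-up of $Y$ at the single point $q$ that is the image of $\ell'$. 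Since an elementary transformation changes the degree of the defining rank-$2$ bundle by one, $Y$ has odd intersection form, as required. The main obstacle is the step $\kappa(X)=-\infty$: the intersection form by itself cannot separate these cases, since a one-point blow-up of \emph{any} minimal elliptic or bielliptic surface with $b_2=2$ carries exactly the form of $X$; it is the input $\pi_1(X)\cong\mathbb{Z}^2$, available only from the homeomorphism hypothesis, that rules out the Kodaira dimension $\ge 0$ possibilities.
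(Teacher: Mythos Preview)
Your argument is correct and follows the same architecture as the paper's: read off $b_1,\chi(\mathcal O_X),\pi_1$ from the homeomorphism, use the Enriques--Kodaira classification together with $\pi_1\cong\mathbb Z^2$ to force $\kappa=-\infty$, deduce that the minimal model is geometrically ruled over an elliptic curve with a single point blown up, and then flip the parity by an elementary transformation (contracting the other $(-1)$-curve). The only substantive difference is in the step $\kappa(X)=-\infty$: the paper disposes of $\kappa\ge 0$ in one line by quoting that an algebraic surface with $\kappa\ge 0$ and $\chi(\mathcal O)=0$ is birational to a torus or a bielliptic surface, whereas you give a self-contained case analysis, ruling out $\kappa=0$ via the $\mathbb Z^4\subset\pi_1$ of a bielliptic surface and $\kappa=1$ via the surjection $\pi_1(X_{\min})\twoheadrightarrow\pi_1^{\mathrm{orb}}(B)$ onto a non-abelian Fuchsian group; your version is longer but more transparent (and in fact more careful, since products $C\times E$ with $g(C)\ge 2$ already show that the paper's quoted classification fact needs the $\pi_1$ hypothesis built in).
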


\begin{proof} Recall that from the Enriques-Kodaira classification
of complex surfaces, it follows that each complex surfaces with odd $b_+$
is K\"ahler, and that any algebraic surface of non-negative Kodaira dimension and
zero holomorphic Euler characteristics is bi-meromorphic to a torus or
a bi-elliptic surface. Since $b_+(X)=1$, it follows that $X$ is K\"ahler and $p_g(X)=0$. Thus $X$ is algebraic.
Since $\pi_1(X)=\mathbb Z^2$ and $\chi(\ring X.)=0$, we conclude
that $X$ has Kodaira dimension $-\infty$.

By the classification of algebraic surfaces, it follows that if $Y$ is the minimal model of $X$, i.e. the surface obtained after blowing-down all the holomorphic $(-1)$ spheres on $X$, then $Y$ is a uniruled surface over a Riemannian surface $\Sigma$. Since $b_1(Y)=b_1(X)=2$, it follows that the genus of $\Sigma$ is one. Moreover, since $b_2(X)=3$,
it follows that $X$ is the blow-up of a ruled surface  over an elliptic curve at a single point $p\in Y$. In particular $X$ has exactly two holomorphic rational curves $E_1$ and $E_2$ with self-intersection $-1$: one is the exceptional divisor of the blow-up map and the other is the strict transform of the rational fiber passing through the blown-up point.
Assume that the intersection form on $H^2(Y,\mathbb Z)$ has even parity. Let $C$ be a curve on $Y$ which pass through $p$ and which meets the fiber of the fibration $\map Y.\Sigma.$ transversally at $p$. Then the strict transform of $C$ in $X$ has odd self-intersection  and it does not intersect $E_2$. Thus, after contracting $E_2$ we obtain a surface $Y'$ such that the intersection form on  $H^2(Y',\mathbb Z)$ has odd parity.
After replacing $Y$ by $Y'$, we may assume that $H^2(Y,\mathbb Z)$ has odd parity.
\end{proof}

\begin{lemma}\label{EXk} Let $\pi\colon \map Y.\Sigma.$ be a ruled projective surface over an elliptic curve $\Sigma$, such that  $H^2(Y,\mathbb Z)$ has odd parity. Let
 $X$ be the blow up of  $Y$ at a single point.
Let $k$ be the class of
the canonical class of $X$ and let $e_1$, $e_2$ be the classes
of the two rational curves of self-intersection $-1$ on $X$.

Then
$\mathcal E_{(X,k)}=\{e_1,e_2\}$.
\end{lemma}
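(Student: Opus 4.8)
The plan is to prove the two inclusions separately. The containment $\{e_1,e_2\}\subseteq\mathcal E_{(X,k)}$ is immediate: $e_1$ and $e_2$ are smooth rational curves on $X$ of self-intersection $-1$ — respectively the exceptional divisor of $X\to Y$ and the strict transform of the fibre through the blown-up point — hence in particular smoothly embedded $2$-spheres with $e_i^2=-1$, so $e_1,e_2\in\mathcal E_X$; and by adjunction $k\cdot e_i=-2-e_i^2=-1$, so $e_1,e_2\in\mathcal E_{(X,k)}$. Note that $e_2=f-e_1$, where $f\in H^2(X,\mathbb Z)$ denotes the class of a generic fibre of the ruling.

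For the reverse inclusion $\mathcal E_{(X,k)}\subseteq\{e_1,e_2\}$ the key is a homological argument using the elliptic ruling. Let $\pi\colon X\to\Sigma$ be the composite of the contraction $X\to Y$ with the ruling $Y\to\Sigma$, let $\alpha\in H^2(\Sigma,\mathbb Z)\cong\mathbb Z$ be the positive generator, and set $f=\pi^*\alpha$; then $f$ is indeed the class of a generic (smooth rational) fibre, $f^2=0$, and $f\cdot e_1=0$ since a generic fibre misses the blown-up point. Now suppose $E\in\mathcal E_X$ is represented by a smoothly embedded sphere $S\subseteq X$. The restriction $\pi|_S\colon S^2\cong S\to\Sigma$ is a map from a simply connected space to the aspherical surface $\Sigma$, so it lifts to the contractible universal cover $\mathbb R^2$ of $\Sigma$ and is null-homotopic; hence $\pi_*[S]=0$ in $H_2(\Sigma,\mathbb Z)$, and therefore $f\cdot E=\langle\pi^*\alpha,[S]\rangle=\langle\alpha,\pi_*[S]\rangle=0$. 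This is precisely the step that uses the positive genus of $\Sigma$ (and where the rational case genuinely differs).

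It then remains a short computation in $H^2(X,\mathbb Z)=H^2(Y,\mathbb Z)\oplus\mathbb Z e_1$. Since $H^2(Y,\mathbb Z)$ is unimodular and $f$ is a primitive class with $f^2=0$, we may pick $\sigma\in H^2(Y,\mathbb Z)$ with $\sigma\cdot f=1$, so that $\{\sigma,f,e_1\}$ is a basis of $H^2(X,\mathbb Z)$ (recall $\sigma\cdot e_1=f\cdot e_1=0$ and $e_1^2=-1$). Writing $E=a\sigma+bf+ce_1\in\mathcal E_{(X,k)}$, the identity $f\cdot E=0$ forces $a=0$; then $E^2=-c^2=-1$ forces $c=\pm1$; and, using the adjunction identities $k\cdot f=-2$ and $k\cdot e_1=-1$, the equation $k\cdot E=-2b-c=-1$ yields $(b,c)=(0,1)$ or $(b,c)=(1,-1)$, i.e.\ $E=e_1$ or $E=f-e_1=e_2$. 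Combined with the first paragraph this gives $\mathcal E_{(X,k)}=\{e_1,e_2\}$.

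I do not expect a serious obstacle: the one idea carrying the proof is the degree-zero observation for spheres mapped to $\Sigma$, and the only point requiring a little care is keeping track of which class — the fibre class $f$, and not, say, the class of a single exceptional curve — is orthogonal to the class of every embedded sphere in $X$. Incidentally, the odd-parity hypothesis on $H^2(Y,\mathbb Z)$, which matters in the surrounding lemmas, is not actually used in this argument.
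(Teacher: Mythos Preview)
Your proof is correct and follows essentially the same route as the paper: both arguments hinge on the observation that any smoothly embedded sphere in $X$ has class in the kernel of $\pi_*\colon H_2(X,\mathbb Z)\to H_2(\Sigma,\mathbb Z)$ (you spell out the null-homotopy reason, the paper just asserts it), and then both reduce to a short lattice computation in that kernel using $E^2=-1$ and $k\cdot E=-1$. The only cosmetic difference is the choice of coordinates --- you work in the basis $\{f,e_1\}$ of the kernel, the paper uses $\{e_1,e_2\}$ with $e_2=f-e_1$ --- and your remark that the odd-parity hypothesis is unused here is accurate.
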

\begin{proof} Let $e$ be a class in
$H_2(X,\mathbb Z)$ which can be represented by a smoothly embedded sphere in $X$
such that $e^2=-1$. Then $e$ belongs to the kernel of $\pi_*: H_2(X,\mathbb Z)\to H_2(\Sigma,\mathbb Z)$.
This kernel is spanned by $e_1$ and $e_2$ and we deduce $e=\pm(ne_1+ (n-1)e_2)$ for some integer
$n$. At the same time $e_1\cdot k=e_2\cdot k=-1$, since $e_1, e_2$
are the classes of exceptional curves on $X$. Thus, if $e\in \mathcal E_{(X,k)}$, then $e\cdot k=-1$ which implies $e=e_1$ or
$e=e_2$.
\end{proof}

\begin{theorem}
Let $\Sigma$ be a Riemann surface of genus $1$, let $\Sigma\times S^2$ be the trivial $S^2$-bundle on $\Sigma$ and let $X=(\Sigma\times S^2)\#\overline {\mathbb C\mathbb P}^2$.

Then, for any complex structure $J$ on $X$, there exists a symplectic form $\omega$ on $X$ such that $\omega$ is not K\"ahler with respect to  $J$.
Moreover, $X$ does not admit any symplectic generic complex structure.
\end{theorem}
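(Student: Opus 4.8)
The plan is to fix an arbitrary complex structure $J$ on $X$ and to exhibit a symplectic form $\omega$ on $X$ with $K_\omega=K_J$ that is not cohomologous to any K\"ahler form of $J$. This gives the first assertion directly, and it gives the second as well, since it shows that $\mathcal C_J$ is properly contained in $\mathcal C_{(X,K_J)}$, so that $J$ fails to be symplectic generic.

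First I would pin down the geometric picture. Since $b_+(X)=1$ is odd, $(X,J)$ is a K\"ahler surface, so by Lemma~\ref{kodimension} it is biholomorphic to the blow-up at one point of a minimal ruled surface $Y$ over an elliptic curve whose intersection pairing on $H^2(Y,\mathbb Z)$ is odd. Identifying $X$ with this blow-up, I set $k:=K_J$ and let $e_1$ be the class of the exceptional divisor and $e_2$ the class of the strict transform of the fibre of the ruling through the blown-up point, so that $k^2=-1$ (because $K_Y^2=0$), $e_i^2=k\cdot e_i=-1$, and $e_1\cdot e_2=1$. By Lemma~\ref{EXk} we then have $\mathcal E_{(X,k)}=\{e_1,e_2\}$.

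Next, by Lemma~\ref{l_curve} the class $C:=e_1-2k$ is represented by an effective, hence $J$-holomorphic, curve, and a direct computation gives $C^2=-1$, $C\cdot e_1=1$ and $C\cdot e_2=3$. I would then imitate the argument in Lemma~\ref{l_knm}: pick a K\"ahler class $\alpha\in\mathcal C_J$, set $v:=\alpha\cdot C>0$, and look at $a(t):=\alpha+tC$ for $t\ge 0$. Since $a(t)^2=\alpha^2+2vt-t^2$ is positive on the interval $I:=\bigl[0,\,v+\sqrt{v^2+\alpha^2}\,\bigr)$, the path $\{a(t)\mid t\in I\}$ stays in the positive cone and hence in the single component $\mathcal P_{(X,k)}$ containing $\alpha$; moreover $a(t)\cdot e_1=\alpha\cdot e_1+t>0$ and $a(t)\cdot e_2=\alpha\cdot e_2+3t>0$ for all $t\ge 0$. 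By Theorem~\ref{l_ll} it follows that $a(t)\in\mathcal C_{(X,k)}$ for every $t\in I$, so each such $a(t)$ is the cohomology class of a symplectic form $\omega_t$ with $K_{\omega_t}=k=K_J$. Finally, choosing any $T\in I$ with $T>v$ (which exists because $\alpha^2>0$) gives $a(T)\cdot C=v-T<0$, so $a(T)$ is not the class of a K\"ahler form of $J$, since such a form would have strictly positive integral over the effective curve $C$. Thus $\omega:=\omega_T$ has the required properties, and as $J$ was arbitrary, $X$ admits no symplectic generic complex structure.

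The step I expect to be the main obstacle is verifying that the perturbed class $a(T)$ genuinely lies in $\mathcal C_{(X,K_J)}$. Because $X$ is not minimal, one cannot appeal to Lemma~\ref{l_knm} directly: one must both identify the connected component of the positive cone in which $a(T)$ sits and check positivity of $a(T)$ against \emph{every} class of $\mathcal E_{(X,K_J)}$, which is exactly where the equality $\mathcal E_{(X,K_J)}=\{e_1,e_2\}$ from Lemma~\ref{EXk} and the sharp form of Li--Luo's theorem (Theorem~\ref{l_ll}) come in. Everything else is bookkeeping with the intersection numbers on the blow-up of a minimal elliptic ruled surface.
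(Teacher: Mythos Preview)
Your proposal is correct and follows essentially the same route as the paper: identify $J$ via Lemma~\ref{kodimension}, take the effective class $C=e_1-2k$ from Lemma~\ref{l_curve}, perturb a K\"ahler class along $C$, and use Lemma~\ref{EXk} together with Theorem~\ref{l_ll} to show the perturbed class is symplectic but negative on $C$. Your version is in fact slightly more careful than the paper's in tracking that the path $a(t)$ stays in the component $\mathcal P_{(X,k)}$ and in computing $C\cdot e_i$ explicitly rather than appealing to $e\cdot E\ge -1$.
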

\begin{proof}

Let $J$ be a complex structure on $X$, let $k$ be the canonical class of $(X,J)$ and let $e$ be the class of the exceptional divisor $E$ of the contraction $\map X.Y.$, whose existence is guaranteed by
Lemma \ref{kodimension}.
Let $a$ be the first Chern class of an ample line bundle on $X$. By Lemma \ref{l_curve}, it follows that $v=a\cdot (e-2k)>0$. Let
$$a(t)=a+t(e-2k)\in H^2(X,\mathbb R)\qquad \text{for all }t>0.$$
In particular, $a(t)\cdot (e-2k)=v-t$ and $a(v)^2=a^2+v^2>0$.
Thus, there exists $T>v$ such that $a(T)^2>0$.
Moreover, if $E\in \mathcal E_{(X,k)}$, then
$$a\cdot E>0\qquad   \qquad k\cdot E=-1 \qquad \text{and by Lemma \ref{EXk}}  \qquad e\cdot E\ge -1.$$
Thus, $a(t)\cdot E>0$ for all $t>0$. Since $b_+(X)=1$,
Theorem \ref{l_ll} implies that the class $a(T)$ is represented by a symplectic form $\omega$, such that $K_\omega=k$.

On the other hand, by Lemma \ref{l_curve}, the class $e-2k$ is represented by a $J$-holomorphic curve $C$ such that $a(T)\cdot C < 0$, since $T>v$. Thus, the class $a(T)$ does not contain a K\"ahler form. In particular, $J$ is not a symplectic generic complex structure.
\end{proof}

\section{Non-ruled Manifolds}

In this section we study Question \ref{question} in the case of smooth {\it minimal}
$4$-manifolds with non-negative Kodaira dimension.

\begin{question}\label{conj}
Let $X$ be a {\it minimal} $4$-manifold which underlies a K\"ahler surface such that $p_g(X)=0$.
Does $X$ admit a symplectic generic complex structure?
\end{question}

In particular, we show that the question has positive answer in the case of zero Kodaira dimension and we provide an example of a minimal surface of general type which does not admit a symplectic generic complex structure.

By the Sieberg-Witten theory, the Kodaira dimension of a K\"ahler surface is preserved under diffeomorphism \cite{BHPV04}. As noted in \cite{Li08}, any uniruled $4$-manifold, i.e. a manifold which underlies a K\"ahler surfaces of Kodaira dimension $-\infty$, admits a symplectic generic complex structure.

We first consider the case of zero Kodaira dimension:

\begin{proposition}\label{p_kod0}
Let $X$ be a $4$-manifold which underlies a K\"ahler surface such that $p_g(X)=0$ and $\kod(X)=0$.

Then $X$ admits a symplectic generic complex structure.
\end{proposition}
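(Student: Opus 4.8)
\emph{Proof proposal.}
The plan is to apply Lemma \ref{l_knm}, which is available because $b_+(X)=2p_g(X)+1=1$ and, by the standing assumption of this section, $X$ is minimal. First I would pin down the complex geometry: by the Enriques--Kodaira classification a minimal K\"ahler surface of Kodaira dimension $0$ is a $K3$ surface, a complex torus, a bi-elliptic surface, or an Enriques surface; the first two have $p_g=1$, so our hypotheses force $X$ to underlie either a bi-elliptic or an Enriques surface. In both of these the canonical class is torsion, hence numerically trivial, so $K_X\cdot C=0$ for every curve $C$; the genus formula then reads $C^2=2p_a(C)-2$, and an irreducible curve can have negative self-intersection only if it is a smooth rational curve, in which case $C^2=-2$. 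Thus, by Lemma \ref{l_knm}, it suffices in each case to produce a complex structure on $X$ with no smooth rational curve of self-intersection $-2$.

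For the bi-elliptic case this is automatic. A bi-elliptic surface $S$ is, via one of its two natural fibrations, a holomorphic fibre bundle over an elliptic curve with smooth genus-one fibres, so it contains no rational curve at all: a rational curve would be contracted to a point of the elliptic base and would therefore sit inside a genus-one fibre, which is impossible. Hence every irreducible curve on $S$ has non-negative self-intersection and, by Lemma \ref{l_knm}, \emph{every} complex structure of bi-elliptic type on $X$ is symplectic generic.

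For the Enriques case one must instead choose the complex structure carefully: it is enough to realise $X$ as an Enriques surface containing no $(-2)$-curve. Such ``unnodal'' Enriques surfaces exist --- indeed the Enriques surfaces carrying a smooth rational curve form a countable union of proper closed subsets of the irreducible $10$-dimensional moduli space (this is classical; see, e.g., \cite{BHPV04}) --- and, since all Enriques surfaces are deformation equivalent and hence diffeomorphic, the four-manifold $X$ does underlie one of them. For that complex structure there is no holomorphic curve of negative self-intersection, so Lemma \ref{l_knm} again gives the conclusion.

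The bi-elliptic case is essentially immediate once Lemma \ref{l_knm} is in hand; the substantive work is in the Enriques case, and the step I expect to be the main obstacle is the appeal to the two structural facts about Enriques surfaces used above --- the existence of a general Enriques surface without smooth rational curves, and the uniqueness of the diffeomorphism type of an Enriques surface, which is what lets one transport that ``general'' complex structure onto the given four-manifold $X$. These are precisely the non-formal ingredients on which the argument rests.
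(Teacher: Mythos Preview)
Your proposal is correct and follows essentially the same route as the paper: reduce via Lemma \ref{l_knm} to excluding $(-2)$-curves, then handle the Enriques case by passing to a generic (unnodal) Enriques surface within the irreducible moduli space, and the bi-elliptic case by showing there are no rational curves at all. The only cosmetic difference is that the paper rules out negative curves on a bi-elliptic surface by exhibiting a positive-dimensional automorphism group (the residual $\Sigma_2$-action), whereas you use the elliptic-fibre-bundle structure directly; both arguments are standard and yield the same conclusion.
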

\begin{proof}
 By the classification of algebraic surfaces, it follows that the canonical class of $X$ is numerically trivial. Thus, by the adjunction formula, the only holomorphic curves of negative self-intersection, are smooth rational curves $C$ such that $C^2=-2$. In particular, Lemma  \ref{l_knm} implies that it is sufficient to show that there exists a complex structure on $X$ which does not admit any of these curves.

 By the classification of algebraic surfaces, we just need to consider two cases: Enriques surfaces and bi-elliptic surfaces.
The moduli space of Enriques surfaces is irreducible and by a result of Barth and Peters \cite[Proposition 2.8]{BP83}, the generic Enriques surface does not contain any smooth rational curve of self-intersection $-2$.

If $X$ is a bi-elliptic surface, then $X=\Sigma_1\times\Sigma_2/G$, where $\Sigma_1$ and $\Sigma_2$ are Riemannian surfaces of genus one and $G$ is an abelian group acting by complex multiplication on $\Sigma_1$ and by translation on $\Sigma_2$. Then the natural action of $\Sigma_2$ on $\Sigma_1\times \Sigma_2$ commutes with the action of $G$ and in particular $\Sigma_2$ acts on $X$ non trivially. Thus, $X$ does not admit any negative self-intersection curve.
By Lemma \ref{l_knm}, it follows that any complex structure on $X$ is symplectic generic.
  \end{proof}

If $X$ is a minimal surfaces of general type with $p_g(X)=0$, it is well known that $q=0$ and $1\le K_X^2\le 9$.  Thus, their moduli spaces is a union of finitely many irreducible varieties.  Nevertheless, it is still not clear what the topology  for these surfaces is (see \cite{BCP10} for a recent survey).  As stated in the introduction, if $X$ is the $4$-manifold underlying the surface $X$,  a positive answer to question \ref{conj} would imply the existence of a complex structure on $X$ which admits a  K\"ahler-Einstein metric.
By the results in \cite{B84,LP07,PPS09,PPS09a}, it follows that there exist a surface of general type which is homeomorphic to $\mathbb C\mathbb P^2 \# k \overline{\mathbb C \mathbb P^2}$, for $5\le k\le 8$. It follows by  \cite{CL97,RS09} that, on any of these surfaces, there exists a complex structure which admits a K\"ahler-Einstein metric with negative curvature.

In general, if $X$ is a minimal surface of general type with $p_g(X)=0$, then $\chi(\ring X.)=1$ and by Noether's formula  we have
$$b_2(X)=\chi(X)-2=12\chi(\ring X.)-K_X^2-2=10-K_X^2.$$
Thus, if $K_X^2=9$, then any class in $\mathcal P_X$ is the multiple of an ample class and the answer to Quesiton
\ref{conj} is obvious.

Let us consider now the case of a surface of general type $S$ with $p_g(X)=0$ and  $K_X^2=8$. All the known examples
have infinite fundamental group and their universal cover is the bidisk $\Delta_1\times \Delta_2\subseteq \mathbb C^2$ \cite{BCP10},
so we assume that $S$ is of this type.
Denote by $w_1$ and $w_2$ two semi-positive (1,1)-forms on $\Delta_1\times \Delta_2$ obtained
via pullbacks of Poincar\'e metrics from the projections of the bidisk to its factors. For any $a,b>0$ the form $aw_1+bw_2$ is K\"ahler on the bidisk and is invariant under the action of $\pi_1(X)$. Thus,
 it descends to a K\"ahler form $w_{a,b}$ on $S$.
Since $b_2(X)=2$, it folllows that for $a,b>0$ the forms $w_{a,b}$ span one of the two connected components
of $\mathcal P_X$, and so the complex structure on $X$ is symplectic generic.

On the other hand, the results in \cite{BC09} immediately imply the existence of a minimal surface of general type which does not admit
a symplectic generic complex structure. Burniat showed the existence of a minimal surface $X$ of general type such that $K_X^2=6$, $p_g(X)=0$, and which admits a $(\mathbb Z/2\mathbb Z)^2$-cover of $\mathbb C\mathbb P^2$ blown-up at $3$ points.
We will call such a surface a {\em Burniat surface}.
\begin{theorem}
Let $X$ be a $4$-manifold which underlies a Burniat surface. Then $X$ does not admit a symplectic generic complex structure.
\end{theorem}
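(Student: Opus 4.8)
The plan is to test the criterion of Lemma~\ref{l_knm} against every complex structure on $X$ simultaneously. Let $X$ denote the oriented $4$-manifold underlying a primary Burniat surface $S_0$; its smooth invariants coincide with those of $S_0$, so $\pi_1(X)\cong\pi_1(S_0)$, while $K^2=2\chi_{\mathrm{top}}(X)+3\sigma(X)=6$, $\chi(\ring X.)=1$ and $b_+(X)=2p_g(S_0)+1=1$. Fix an arbitrary complex structure $J$ on $X$. Since the Kodaira dimension of a K\"ahler surface is a diffeomorphism invariant \cite{BHPV04}, $(X,J)$ is of general type; it is moreover minimal, since a $4$-manifold underlying a minimal surface of general type carries no smoothly embedded sphere of self-intersection $-1$ (Seiberg--Witten theory), i.e.\ $\mathcal E_X=\emptyset$, whereas a non-minimal surface of general type contains a holomorphic $(-1)$-curve. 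Hence $(X,J)$ is a minimal projective surface of general type with $p_g=0$, $q=0$, $K_J^2=6$ and $\pi_1\cong\pi_1(S_0)$, and Lemma~\ref{l_knm} applies: to show that $J$ is not symplectic generic it suffices to exhibit a single $J$-holomorphic curve of negative self-intersection.

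This is where the description of the moduli of Burniat surfaces of Bauer and Catanese \cite{BC09} enters. A minimal surface of general type with $p_g=0$, $K^2=6$ and fundamental group isomorphic to that of a primary Burniat surface is itself a primary Burniat surface; in particular $(X,J)$ is equipped with a Galois $(\mathbb Z/2\mathbb Z)^2$-cover $f\colon (X,J)\to\hat{\mathbb P}$, where $\hat{\mathbb P}$ is $\mathbb C\mathbb P^2$ blown up at three points in general position. Let $E\subseteq\hat{\mathbb P}$ be one of the three exceptional curves, so that $E^2=-1$. As $f$ is finite and surjective, $f^{-1}(E)$ is a nonempty $J$-holomorphic curve; write $f^*E=\sum_{i=1}^r m_iC_i$ with the $C_i$ its distinct irreducible components and $m_i\ge 1$. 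By the projection formula $(f^*E)^2=(\deg f)\,E^2=-4$, while $C_i\cdot C_j\ge 0$ for $i\ne j$, so $\sum_i m_i^2C_i^2\le -4$ and therefore $C_i^2<0$ for some $i$. That component is a $J$-holomorphic curve of negative self-intersection, so $J$ is not symplectic generic by Lemma~\ref{l_knm}. Since $J$ was arbitrary, $X$ admits no symplectic generic complex structure.

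The only step carrying genuine weight is the recognition used in the second paragraph: it is the theorem of \cite{BC09} that the data $p_g=0$, $K^2=6$ together with the fundamental group pin the surface down to the single deformation family of primary Burniat surfaces, so that \emph{every} complex structure on $X$ — not merely the one carried by $S_0$ — is presented by the $(\mathbb Z/2\mathbb Z)^2$-cover construction. I expect this to be the main obstacle; once it is granted the remainder is formal, since multiplicativity of the intersection form under a finite cover automatically manufactures the negative curve, and Lemma~\ref{l_knm} converts it into an obstruction to being symplectic generic. A minor, routine point is the vanishing $\mathcal E_X=\emptyset$ used to upgrade ``general type'' to ``minimal'', which rests on Seiberg--Witten theory and deserves an explicit reference in view of $b_+(X)=1$.
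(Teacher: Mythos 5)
Your proof is correct and follows essentially the same route as the paper: invoke Bauer--Catanese to see that every complex structure on $X$ is a (primary) Burniat surface, extract a $J$-holomorphic curve of negative self-intersection from the preimage of a $(-1)$-curve under the bidouble cover, and conclude via Lemma~\ref{l_knm}. The only difference is that you derive the negative curve by the projection formula (the paper instead quotes that the preimage is an elliptic curve of self-intersection $-1$), and you helpfully make explicit the minimality and $b_+=1$ hypotheses that the paper leaves implicit.
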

\begin{proof}
By \cite[Theorem 0.2]{BC09},  any complex structure $J$ on $X$ is a Burniat surface. In particular, $X$ admits a $J$-holomorphic curve $C$ of negative self-intersection, which maps to a $(-1)$-curve on the blow-up of $\mathbb C\mathbb P^2$ at $3$ points. More specifically, $C$ is an elliptic curve of self-intersection $-1$. Thus, by Lemma \ref{l_knm}, it follows that $J$ is not symplectic generic.
\end{proof}
Note that a Burniat surface has infinite fundamental group. We do not know any complex surface with $p_g=0$, finite fundamental group and which does not admit a symplectic generic complex structure.

Recall finally that there exist a wide class of
minimal elliptic surfaces of Kodaira dimension $1$ and with $p_g=0$. These surfaces
have topological Euler characteristic equal to $12$, the base of the corresponding elliptic
fibration is $\mathbb CP^1$, and the fibration can have any number of multiple fibers greater than $1$.
It would be interesting to show that all such surfaces admit a symplectic
generic complex structure.

\bibliographystyle{amsalpha}
\bibliography{Library}

\end{document}